\theoremstyle{plain}
\newtheorem{thm}{Theorem}[section]
\newtheorem{lem}[thm]{Lemma}
\newtheorem{prop}[thm]{Proposition}
\theoremstyle{definition}
\newtheorem{cla}[thm]{Claim}
\numberwithin{equation}{section}
\def\supp{\operatorname{supp}}
\def\eps{\varepsilon}
\def\dist{\operatorname{dist}}
\def\XXint#1#2#3{{\setbox0=\hbox{$#1{#2#3}{\int}$}
     \vcenter{\hbox{$#2#3$}}\kern-.5\wd0}}
\begin{document}

\title[Three revolutions]
{Three revolutions in the kernel are worse than one}

\author[B. Jaye]
{Benjamin Jaye}
\address{Department of Mathematical Sciences,
Kent State University,
Kent, OH 44240, USA}
\email{bjaye@kent.edu}

\author[F. Nazarov]
{Fedor Nazarov}
\email{nazarov@math.kent.edu}

\date{\today}

\begin{abstract}An example is constructed of a purely unrectifiable measure $\mu$ for which the singular integral associated to the kernel $K(z) = \frac{\bar{z}}{z^2}$ is bounded in $L^2(\mu)$.  The singular integral fails to exist in the sense of principal value $\mu$-almost everywhere.  This is in sharp contrast with the results known for the kernel $\tfrac{1}{z}$ (the Cauchy transform).\end{abstract}

\maketitle

\section{Introduction}

Let $B(z,r)$ denote the closed disc in $\mathbb{C}$ centred at $z$ with radius $r>0$.  A finite Borel measure $\mu$ is said to be $1$-dimensional if $\mathcal{H}^1(\supp(\mu))<\infty$, and there exists a constant $C>0$ such that $\mu(B(z,r))\leq Cr$ for any $z\in \mathbb{C}$ and $r>0$.

For a kernel function $K:\mathbb{C}\backslash \{0\}\rightarrow \mathbb{C}$, and a finite measure $\mu$, we define the singular integral operator associated to $K$ by
$$T_{\mu}(f)(z) = \int_{\mathbb{C}}K(z-\xi) f(\xi) d\mu(\xi), \text{ for }z\not\in \supp(\mu).$$

A well-known problem in harmonic analysis is to determine geometric properties of $\mu$ from regularity properties of the operator $T_{\mu}$, see for instance the monograph of David and Semmes \cite{DS}.  This paper concerns the question of characterizing those functions $K$ with the following property:
\begin{equation}\tag{$*$}\begin{split}
   &\textit{Let }\mu \textit{ be a 1-dimensional measure. Then } \\
   &\|T_{\mu}(1)\|_{L^{\infty}(\mathbb{C}\backslash \supp(\mu))}<\infty\textit{ implies that }\mu \textit{ is rectifiable}.
\end{split}\end{equation}


The property that $\|T_{\mu}(1)\|_{L^{\infty}(\mathbb{C}\backslash \supp(\mu))}<\infty$ is equivalent to the boundedness of $T_{\mu}$ as an operator in $L^2(\mu)$, see for instance \cite{NTV}.  A measure $\mu$ is rectifiable if $\supp(\mu)$ can be covered (up to an exceptional set of $\mathcal{H}^1$ measure zero) by a countable union of rectifiable curves.  A measure $\mu$ is purely unrectifiable if its support is purely unrectifiable, that is, $\mathcal{H}^1(\Gamma\cap \supp(\mu))=0$ for any rectifiable curve $\Gamma$.

David and L\'{e}ger  \cite{Leg} proved that the Cauchy kernel $\tfrac{1}{z}$ has property ($*$).  As is remarked in \cite{CMPT}, the proof in \cite{Leg} extends to the case when the Cauchy kernel is replaced by either its real or imaginary part, i.e. $\tfrac{\Re(z)}{|z|^2}$ or $\tfrac{\Im(z)}{|z|^2}$.  Recently in \cite{CMPT},  Chousionis,  Mateu, Prat, and  Tolsa extended the result of \cite{Leg} and showed that kernels of the form $\tfrac{(\Re (z))^k}{|z|^{k+1}}$ have property ($*$) for any odd positive integer $k$.  Both of these results use the Melnikov-Menger curvature method.  

On the other hand, Huovinen \cite{Huo2} has shown that there is a purely unrectifiable Ahlfors-David (AD)-regular set $E$ for which the singular integral associcated to the kernel $\tfrac{\Re(z)}{|z|^2}-\tfrac{\Re(z)^3}{|z|^4}$ is bounded in $L^2(\mathcal{H}^1_{|E})$.  In fact, an essentially stronger conclusion is proved that the principal values of the associated singular integral operator exist $\mathcal{H}^1$-a.e. on $E$.  Huovinen takes advantage of several non-standard symmetries and cancellation properties in this kernel to construct his very nice example.

The result of this paper is that a weakened version of Huovinen's theorem holds for a very simple kernel function.  Indeed, it is perhaps the simplest example of a kernel for which the Menger curvature method fails to be directly applicable.  From now on, we shall fix \begin{equation}\label{kernel}K(z) = \frac{\bar{z}}{z^2}, z\in \mathbb{C}\backslash \{0\}.\end{equation}  
We prove the following result.

\begin{thm}\label{thm1}  There exists a $1$-dimensional purely unrectifiable probability measure $\mu$ with the property that $\|T_{\mu}(1)\|_{L^{\infty}(\mathbb{C}\backslash \supp(\mu))}<\infty.$
\end{thm}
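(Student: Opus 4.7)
The kernel $K$ has a crucial rotational symmetry: writing $z=re^{i\theta}$ gives $K(z)=e^{-3i\theta}/r$, which is invariant under $z\mapsto e^{2\pi i/3}z$. My plan is to build $\mu$ as a self-similar probability measure with matching threefold symmetry, so that the algebra of $K$ forces the leading contributions to $T_\mu(1)$ to cancel by exact triangle-summation.

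\emph{The construction.} Fix a contraction ratio $\lambda\in(0,1/3]$. Starting from a configuration of three discs of radius $1$ centred at the cube roots of unity, iterate: replace each $n$-th generation disc of radius $\lambda^n$ centred at $w$ by three discs of radius $\lambda^{n+1}$ centred at $w + c\lambda^{n}e^{i\alpha_n}e^{2\pi ik/3}$, $k=0,1,2$, for a fixed constant $c$ and a carefully chosen sequence of rotation angles $\alpha_n$. Let $\mu$ be the resulting self-similar measure, assigning mass $3^{-n}$ to each $n$-th generation disc. Self-similarity yields the growth condition $\mu(B(z,r))\le Cr$, so $\mu$ is $1$-dimensional; pure unrectifiability follows from a Besicovitch--Mattila style projection argument, since the $3$-fold symmetric Cantor structure prevents the projection of $\mu$ onto a generic direction from carrying positive Lebesgue mass.

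\emph{The main estimate.} To bound $T_\mu(1)(z)$ for $z\notin\supp(\mu)$, I use the telescoping decomposition $\mu=\mu_0+\sum_{n\ge 1}(\mu_n-\mu_{n-1})$, where $\mu_n$ puts equal mass at the centres of the generation-$n$ discs. Each increment $\mu_n-\mu_{n-1}$ localises to the generation-$(n-1)$ discs, and on a single parent disc centred at $w$ it is a signed measure of total mass zero consisting of three positive deltas at the vertices of a triangle of radius $\sim\lambda^{n-1}$ around $w$, minus one negative delta of triple mass at $w$. Expanding $K(z-\xi)$ in a Taylor series around $z-w$ and summing over the three triangle vertices kills every monomial whose total angular index is not a multiple of $3$; the smallest surviving term has magnitude $O(\lambda^{2n}/|z-w|^3)$. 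Multiplying by $3^{-n}$, summing over the $\lesssim R/\lambda^{n-1}$ parent discs at distance $\sim R$ from $z$ (using the growth bound), and then over dyadic distances and generations, one obtains a geometric series bounded independently of $z$.

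\emph{The main obstacle.} The Taylor cancellation above is effective only for generations $n$ with $\lambda^n\ll\dist(z,\supp(\mu))$; the finitely many coarser increments must be handled by a direct argument that relies on $z$ lying outside the support. This is where the choice of the rotation angles $\alpha_n$ enters crucially: they must be arranged to destroy any accidental coherence across generations that would spoil the uniform bound as $z$ approaches $\supp(\mu)$. Balancing these cancellation requirements against the geometric constraints that keep the $n$-th generation discs disjoint (so that $\mu$ is genuinely $1$-dimensional) is, I expect, the most delicate technical point.
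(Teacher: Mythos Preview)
Your threefold-symmetry observation is correct and attractive, but the cancellation it produces is one order too weak to close the argument. With $\lambda=1/3$ (forced by the linear-growth requirement), the leading surviving term in $\sum_{k=0}^{2}K(u-\eta_k)-3K(u)$ is $-6\rho^2/u^3$, coming from the $\bar\eta_k\cdot\eta_k$ cross term; note that this term depends only on $\rho=|\eta_k|$ and not on the rotation angle $\alpha_n$. Multiplying by $3^{-n}$ and summing over level-$(n-1)$ parents using the $1$-dimensional growth bound gives a contribution of order $3^{-n}\rho^2\sum_w|z-w|^{-3}\asymp 3^{-n}\cdot 3^{-2(n-1)}\cdot 3^{n}\cdot R_0^{-2}$ with $R_0\sim 3^{-(n-2)}$, which is an absolute constant independent of $n$. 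Thus the ``coarse'' generations $n$ with $\lambda^n\gtrsim\dist(z,\supp\mu)$ each contribute $O(1)$, and there are $\sim\log(1/\varepsilon)$ of them, so your bound blows up logarithmically as $z$ approaches the support. This is exactly the mechanism that makes the Cauchy transform unbounded on the four-corner Cantor set, where the analogous fourfold cancellation also yields only $O(1)$ per scale; so the symmetry argument, by itself, cannot distinguish $\bar z/z^2$ from $1/z$. Since the problematic $-6\rho^2/u^3$ term is rotation-invariant, no choice of the angles $\alpha_n$ will remove it.

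The paper's construction is entirely different and rests on a property you have not used: the kernel is \emph{reflectionless} against planar Lebesgue measure on discs, meaning $\int_{B}K(\omega-\xi)\,dm_2(\xi)=0$ for every $\omega\in B$. The authors build a \emph{non}-self-similar sparse Cantor set in which each level-$n$ disc is almost exactly tiled by many tiny squares, each containing a level-$(n{+}1)$ disc; the approximating measures are normalised $m_2$ on the current discs. Passing from one level to the next, the reflectionless identity kills the main term outright, and the error is controlled by the tiling defect $s_{n+1}=4\sqrt{r_{n+1}/r_n}$, which can be made summable by letting the ratios $r_{n+1}/r_n\to 0$ rapidly. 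The resulting measure is not AD-regular (unlike any self-similar example), which is consistent with Huovinen's thesis result mentioned in the introduction. If you want to salvage a symmetry-based approach, you would need a further mechanism that makes the constant-per-scale contributions telescope or cancel across scales; nothing in the proposal supplies this.
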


In other words, the kernel $K$ in (\ref{kernel}) fails to satisfy property ($*$).  At this point, we would also like to mention Huovinen's thesis work \cite{Huo1}, regarding the kernel function $K(z)$ from (\ref{kernel}).  It is proved that if $\liminf_{r\rightarrow0}\frac{\mu(B(z,r))}{r}\in (0,\infty)$ $\mu$-a.e. (essentially the AD-regularity of $\mu$), then the $\mu$-almost everywhere existence of $T_{\mu}(1)$ in the sense of principal value implies that $\mu$ is rectifiable.  This result was proved by building upon the theory of symmetric measures, developed by Mattila \cite{Mat95b}, and Mattila and Preiss \cite{MP95}. Unfortunately the measure in Theorem \ref{thm1} does not satisfy the AD-regularity condition.  In view of Huovinen's work it would be of interest to construct an AD-regular measure supported on an unrectifiable set for which the conclusion of Theorem \ref{thm1} holds.  We have not been able to construct such a measure (yet).

For the measure $\mu$ constructed in Theorem \ref{thm1}, we show that $T_{\mu}(1)$ fails to exist in the sense of principal value $\mu$-almost everywhere.  Thus the two properties of $L^2(\mu)$ boundedness of the operator $T_{\mu}$, and the existence of $T_{\mu}(1)$ in the sense of principal value, are quite distinct for this singular integral operator. 


\section{Notation}
\begin{itemize}
\item Let $m_2$ denote the $2$-dimensional Lebesgue measure normalized so that $m_2(B(0,1))=1$.  We let $m_1$ denote the $1$-dimensional Lebesgue measure.
\item A collection of squares are essentially pairwise disjoint if the interiors of any two squares in the collection do not intersect.  Throughout the paper, all squares are closed.
\item We shall denote by $C$ and $c$ large and small absolute positive constants.  The constant $C$ should be thought of as large (at least $1$), while $c$ is to be thought of as small (smaller than $1$).
\item For $a>1$, the disc $aB$ denotes the concentric enlargement of a disc $B$ by a factor of $a$.
\item We define the $\mathcal{H}^1$-measure of a set $E$ by\\ $\mathcal{H}^1(E) = \sup_{\delta>0}\inf\bigl\{\sum_j r_j \,:\, E\subset \bigcup_j B(x_j,r_j) \text{ with }r_j\leq \delta \bigl\}.$
\item For $z\in \mathbb{C}$ and $r>0$,  we define the annulus $A(z,r) = B(z,r)\backslash B(z, \tfrac{r}{2})$.
\item The set $\supp(\mu)$ denotes the closed support of $\mu$.
\end{itemize}
\section{A reflectionless measure}

Let us make the key observation that allows us to prove Theorem \ref{thm1}.

\begin{lem}\label{refl}  Let $z\in \mathbb{C}$, $r>0$.  For any $\omega \in B(z,r)$,
$$\int_{B(z,r)}K(\omega-\xi)dm_2(\xi)=0.
$$
\end{lem}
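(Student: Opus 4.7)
The approach is to recognize $K$ as a $\partial/\partial u$-derivative of a bounded function and then reduce the area integral to boundary integrals via the complex form of Green's theorem. A direct computation gives the key identity
$$
K(u) \;=\; \frac{\bar u}{u^{2}} \;=\; \partial_{u}\!\left(-\frac{\bar u}{u}\right),
$$
so that $K = \partial_{u} F$ with $F(u) := -\bar u/u$ smooth on $\mathbb{C}\setminus\{0\}$ and $|F|\equiv 1$.

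After the translation $u = \omega - \xi$, which sends $B(z,r)$ to $B(c,r)$ with $c := \omega - z$, the statement is equivalent to $\int_{B(c,r)} K(u)\,dm_{2}(u) = 0$ for every $c$ with $|c|\le r$. Since $|K(u)|=1/|u|$ is locally integrable, the left-hand side depends continuously on $c$, so I may assume $|c|<r$. Then I would excise a small disc $B(0,\varepsilon)$ with $\varepsilon < r - |c|$ so that $F$ is smooth on the annular region $D_{\varepsilon}:=B(c,r)\setminus B(0,\varepsilon)$, and apply Green's theorem to obtain
$$
\int_{D_{\varepsilon}} K\,dm_{2} \;=\; \frac{i}{2\pi}\!\left(\int_{\partial B(c,r)} F\,d\bar u \;-\; \int_{\partial B(0,\varepsilon)} F\,d\bar u\right).
$$
The inner boundary integral vanishes after the parametrization $u = \varepsilon e^{i\theta}$: the integrand becomes a constant multiple of $e^{-3i\theta}$ and integrates to zero over $[0,2\pi]$. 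Sending $\varepsilon\to 0$ and invoking local integrability of $K$ then reduces the whole claim to the vanishing of the outer boundary integral.

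The main obstacle is to establish $\int_{\partial B(c,r)} F\,d\bar u = 0$. My plan for this is to substitute $v := u-c$, so that the contour becomes $|v|=r$, and then exploit the identity $\bar v = r^{2}/v$ on this circle to eliminate the anti-holomorphic part of $F$. What remains is a standard contour integral of a rational function of $v$ on $|v|=r$, whose integrand has a pole of order three at $v=0$ and a simple pole at $v=-c$ (which lies inside $|v|<r$ thanks to $|c|<r$). A direct residue computation shows that the residues at $v=0$ and $v=-c$ are equal in magnitude and opposite in sign, producing the required cancellation. The cancellation is computational rather than conceptual: there does not appear to be an obvious symmetry forcing it, so the residue calculation is the true substance of the proof.
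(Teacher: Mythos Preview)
Your argument is correct. The identity $K=\partial_u(-\bar u/u)$ is right, the Green--Stokes reduction (with the paper's normalization $m_2(B(0,1))=1$) gives exactly the coefficient $i/(2\pi)$ you wrote, and the residue computation on the outer circle goes through: after the substitution $\bar v=r^2/v$ one is left with $r^2\int_{|v|=r}\frac{r^2+\bar c\,v}{v^3(v+c)}\,dv$, and the residues at $v=0$ and $v=-c$ are $\pm(r^2-|c|^2)/c^3$. The degenerate case $c=0$ is also fine (the integrand becomes $r^4/v^4$, with zero residue), so the continuity reduction to $|c|<r$ is harmless.

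The paper takes a different and somewhat more elementary route: it integrates in polar coordinates centred at the disc's centre, expanding $K(\omega-\xi)$ as a power series in $\omega/\xi$ or $\xi/\omega$ according to whether $|\xi|>|\omega|$ or $|\xi|<|\omega|$. Fourier orthogonality on circles kills every term on the outer annulus $|\xi|>|\omega|$, while on the inner disc one is left with a nonzero circle average $\tfrac{2\pi}{\omega^3}(t|\omega|^2-2t^3)$ whose radial integral over $t\in[0,|\omega|]$ vanishes. The paper's computation thus isolates exactly where the cancellation occurs (only the contribution from $|\xi|<|\omega|$ is nonzero on circles, and it integrates out radially), whereas your Green--residue approach packages the same cancellation into the matching of two residues. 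Your method has the advantage of being immediately applicable to any kernel that is a $\partial_u$-derivative of a bounded function, while the paper's approach avoids the need to spot the antiderivative and makes the role of the specific exponents in $\bar z/z^2$ quite explicit.
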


\begin{proof}
Without loss of generality, we may set $z=0$ and $r=1$.  If $|\omega|<|\xi|$, then $$ K(\omega-\xi) = \frac{\overline{\omega - \xi}}{\xi^2}\sum_{\ell=0}^{\infty}(\ell+1)\Bigl(\frac{\omega}{\xi}\Bigl)^{\ell}.$$
So whenever $t>|\omega|$, we have $ \int_{\partial B(0,t)} K(\omega-\xi) dm_1(\xi)=0$.  (This follows merely from the fact that $\int_{\partial B(0,t)}\bar{\xi}^{\ell}\xi^k dm_1(\xi)=0$ whenever $k, \ell\in \mathbb{Z}$ satisfy $k\neq \ell$.)  On the other hand, if $|\xi|<|\omega|$, then
$$K(\omega-\xi) = \frac{\overline{\omega - \xi}}{\omega^2}\sum_{\ell=0}^{\infty}(\ell+1)\Bigl(\frac{\xi}{\omega}\Bigl)^{\ell}.
$$
Therefore, if $t<|\omega|$, then
$$\int_{\partial B(0,t)} K(\omega-\xi) dm_1(\xi) = 2\pi\Bigl[ t\frac{\overline{\omega}}{\omega^2}-2 \frac{t^3}{\omega^3} \Bigl]= \frac{2\pi}{\omega^3}(t|\omega|^2-2t^3).
$$
Since $\int_0^{|\omega|}(t|\omega|^2-2t^3) dt=0$, the desired conclusion follows.
\end{proof}


The next lemma will form the basis of the proof of the non-existence of $T_{\mu}(1)$ in the sense of principal value.

\begin{lem}\label{lebmisbal} There exists a constant $\tilde{c}>0$ such that for any disc $B(z,r)$, and $\omega \in\partial B(z,r)$,
$$\Bigl|\int_{A(\omega, r)\cap B(z,r)} K(\omega-\xi) \frac{dm_2(\xi)}{r}\Bigl|\geq \tilde{c}.
$$
\end{lem}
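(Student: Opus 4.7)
The plan is to leverage the translation, scaling, and rotational symmetries of $K$ to reduce the inequality to a single explicit integral that can be computed in closed form. Translation by $-\omega$ and scaling by $1/r$ allow me to set $\omega = 0$ and $r = 1$, so that $z$ lies on the unit circle. Next, I use the identity $K(e^{i\theta}\xi) = e^{-3i\theta}K(\xi)$: rotating the configuration about $\omega = 0$ through the angle that carries $z$ to $1$ multiplies the integral by a unimodular factor but preserves its modulus. The problem thus reduces to showing that
\[
I \;=\; \int_{A(0,1)\cap B(1,1)} K(-\xi) \, dm_2(\xi)
\]
is a nonzero absolute constant.

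The next step is a computation in polar coordinates $\xi = \rho e^{i\phi}$. One has $K(-\xi) = -\bar\xi/\xi^2 = -\rho^{-1}e^{-3i\phi}$, and the region $A(0,1)\cap B(1,1)$ is described by $\rho \in [1/2,1]$ together with $|\phi|\leq \phi^{*}(\rho) := \arccos(\rho/2)$. The Jacobian $\rho$ cancels the $\rho^{-1}$ from $K$, and the angular integral evaluates to
\[
\int_{-\phi^{*}(\rho)}^{\phi^{*}(\rho)} e^{-3i\phi}\,d\phi \;=\; \tfrac{2}{3}\sin\bigl(3\phi^{*}(\rho)\bigr),
\]
which reduces the problem to evaluating the one-variable integral $-\tfrac{2}{3\pi}\int_{1/2}^{1}\sin(3\arccos(\rho/2))\,d\rho$. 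After the substitution $u = \arccos(\rho/2)$ and a product-to-sum identity this becomes an elementary integral of $\cos(2u)-\cos(4u)$ over $[\pi/3, \arccos(1/4)]$, so there is an explicit formula for $|I|$. There is no concern about the singularity of $K$ since $\xi = 0$ lies well outside the integration region.

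The main thing that must be checked is that this integral does not vanish by some fortuitous cancellation. For $\rho \in (1/2,1)$ the angle $\phi^{*}(\rho)$ is a strictly decreasing function from $\arccos(1/4)$ to $\pi/3$, so $3\phi^{*}(\rho)$ lies entirely in $(\pi, 3\arccos(1/4)) \subset (\pi, 2\pi)$. On this interval $\sin$ is strictly negative, vanishing only at the endpoint $\rho = 1$. Hence the $\rho$-integrand has a definite sign, the integral is strictly nonzero, and $\tilde c := |I|$ is the desired absolute positive constant.
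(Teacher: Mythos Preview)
Your proof is correct. The symmetry reduction is the same as the paper's, but the final non-vanishing argument is organized differently: the paper normalizes to $z=i$, splits $A(0,1)\cap B(i,1)$ into a central sector $I=\{\arg\xi\in[\pi/6,5\pi/6]\}$ and two residual slivers $II\cup III$, observes that $\int_I \Im K(-\xi)\,dm_2=0$ by the $e^{-3i\theta}$ symmetry, and then notes $\Im K(-\xi)>0$ throughout $II\cup III$. You instead place $z=1$, carry out the full polar integration to reduce to the single real integral $-\tfrac{2}{3\pi}\int_{1/2}^{1}\sin(3\arccos(\rho/2))\,d\rho$, and check the integrand has constant sign because $3\arccos(\rho/2)\in(\pi,3\arccos(1/4))\subset(\pi,2\pi)$. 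Your route is a bit more computational but has the advantage of producing an explicit closed-form value for $\tilde c$, while the paper's decomposition sidesteps any evaluation by isolating a region where the sign of $\Im K(-\xi)$ is manifest.
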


\begin{figure}[t]\label{balance}
\centering
 \includegraphics[width = 110mm]{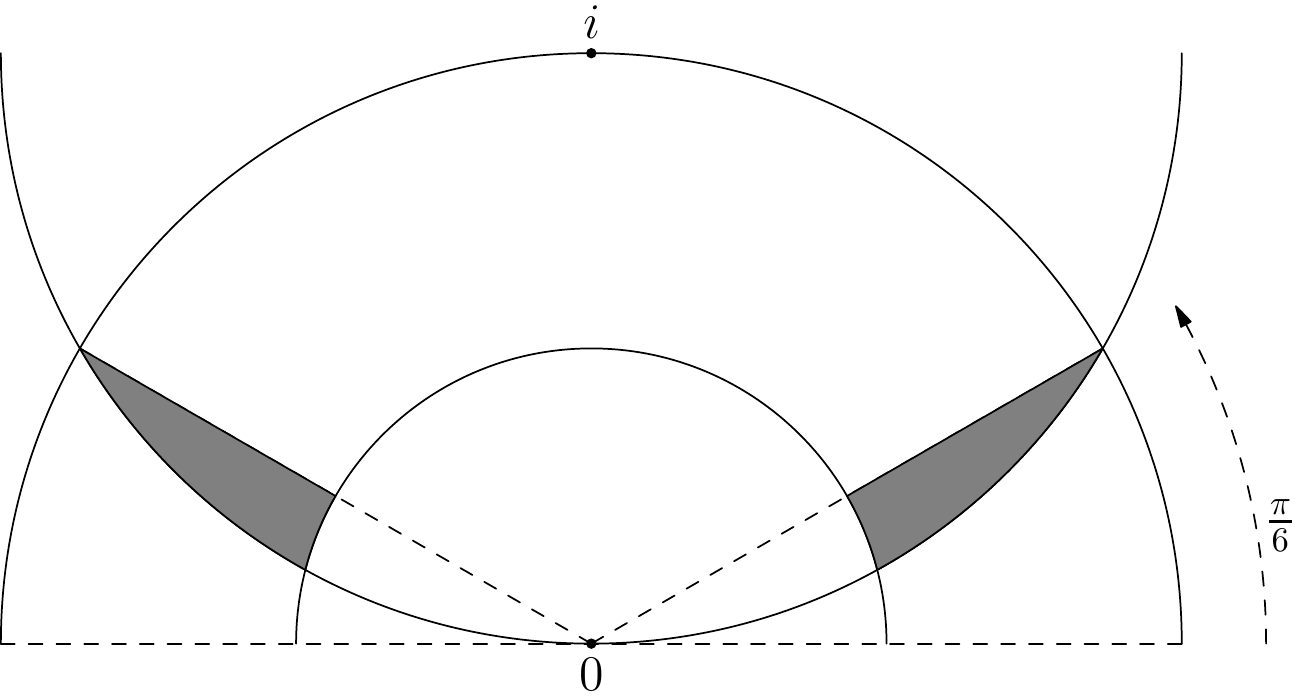}
\caption[Blah.]{The set-up for the proof of Lemma \ref{lebmisbal}.}
 \end{figure}

\begin{proof}By an appropriate translation and rescaling, we may assume that $B(z,r) = B(i,1)$, and $\omega=0$.  Making reference to Figure 1 above, we split the domain of integration into three regions, $I = \{\xi \in A(0,1): \arg(\xi)\in \bigl[\frac{\pi}{6}, \frac{5\pi}{6}]\}$, $II = \{\xi \in A(0,1)\cap B(i,1): \arg(\xi)\in \bigl[0,\frac{\pi}{6}\bigl]\}$ and $III = \{\xi \in A(0,1)\cap B(i,1): \arg(\xi)\in \bigl[\frac{5\pi}{6}, \pi\bigl]\}$.   The regions $II$ and $III$ are respectively the right and left grey shaded regions in Figure 1.  Note that $\Im K(-\xi) <0$ if $\arg(\xi)\in \bigl[\frac{\pi}{3}, \frac{2\pi}{3}\bigl]$, and $\Im K(-\xi) >0$ if $\arg(\xi)\in \bigl[0,\frac{\pi}{3}\bigl]\cup \bigl[\frac{2\pi}{3}, \pi \bigl].$  Furthermore, note that $$\int_{I} \Im K(-\xi) dm_2(\xi) = \frac{1}{\pi}\int_{\tfrac{1}{2}}^1\frac{1}{t}\int_{\tfrac{\pi}{6}}^{\tfrac{5\pi}{6}}-\Im \bigl(e^{-3\theta i}\bigl)td\theta dt =0.$$
But $\int_{II\cup III} \Im K(-\xi) dm_2(\xi) =  2 \int_{II} \Im K(-\xi) dm_2(\xi)>0$.  Therefore, by setting $\tilde{c} = 2 \int_{II} \Im K(-\xi) dm_2(\xi)$, the lemma follows.\end{proof}

\section{Packing squares in a disc}

Fix $r,R\in (0,\infty)$ such that $r<\tfrac{R}{16}$ and $\tfrac{R}{r}\in \mathbb{N}$.

\begin{lem}\label{squarepac}  One can pack $\tfrac{R}{r}$ pairwise essentially disjoint squares of side length $\sqrt{\pi r R}$ into a disc of radius $R(1+4\sqrt{\tfrac{r}{R}})$.\end{lem}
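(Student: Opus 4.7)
The plan is a straightforward grid-packing argument. Set $s := \sqrt{\pi r R}$ and $R' := R(1 + 4\sqrt{r/R}) = R + 4\sqrt{rR}$. I will tile $\mathbb{C}$ by a grid of closed axis-parallel squares of side length $s$, and let $G$ denote the union of those tiles entirely contained in the disc $B(0, R')$. The goal is to show that $G$ is the union of at least $R/r$ tiles, which I will accomplish by estimating the area $|G|$ from below.

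The key geometric observation is that any point of $B(0, R') \setminus G$ lies in some tile that also meets the exterior of $B(0, R')$; since each tile has diameter $s\sqrt{2}$, such a point must be within distance $s\sqrt{2}$ of the circle $\partial B(0, R')$. Therefore $B(0, R') \setminus G$ is contained in the annulus $\{R' - s\sqrt{2} \leq |z| \leq R'\}$, whose area is at most $2\sqrt{2}\pi R' s$. Subtracting from the area of the disc yields
\begin{equation*}
|G| \geq \pi (R')^2 - 2\sqrt{2}\pi R' s.
\end{equation*}

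After dividing by $s^2 = \pi r R$ and expanding $(R + 4\sqrt{rR})^2$, this should simplify to
\begin{equation*}
\frac{|G|}{s^2} \geq \frac{R}{r} + (8 - 2\sqrt{2\pi})\sqrt{\frac{R}{r}} + (16 - 8\sqrt{2\pi}).
\end{equation*}
Since $8 > 2\sqrt{2\pi} \approx 5.01$, and since the hypothesis $r < R/16$ forces $\sqrt{R/r} > 4$, the last two terms on the right combine to something strictly positive; hence $|G|/s^2 \geq R/r$, as required. The only calibration that really demands care is the enlargement factor in $R'$: the boundary loss accounts for $O(R's/s^2) = O(\sqrt{R/r})$ missing squares, while enlarging $R$ by $\delta$ adds roughly $2\delta/r$ squares of area budget, so one must pick $\delta \sim \sqrt{rR}$. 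The specific constant $4$ in $R' = R + 4\sqrt{rR}$ is chosen just large enough to dominate the $2\sqrt{2\pi}$ coming out of the annulus estimate, with the hypothesis $r < R/16$ mopping up the additive residue.
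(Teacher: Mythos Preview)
Your argument is correct, but it takes a slightly different route from the paper's. You tile with the same grid of side $s=\sqrt{\pi rR}$, but you keep the squares that lie \emph{inside} the large disc $B(0,R')$, and then you must subtract the boundary annulus of width $s\sqrt{2}$ to bound $|G|$ from below; this is what forces the explicit arithmetic with $2\sqrt{2\pi}$ and the use of the hypothesis $r<R/16$.

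The paper's proof is a bit slicker: it keeps instead all grid squares that \emph{intersect} the smaller disc $B(0,R)$. These automatically cover $B(0,R)$, so the total area exceeds that of $B(0,R)$ and the count $M>R/r$ is immediate with no annulus estimate. Containment in the enlarged disc is then just the observation that each such square lies in $B\bigl(0,R+\operatorname{diam}(Q)\bigr)=B\bigl(0,R+\sqrt{2\pi rR}\bigr)\subset B(0,R')$, since $\sqrt{2\pi}<4$. So the paper trades your area-loss computation for a one-line diameter bound, and never needs the hypothesis $r<R/16$ in this lemma. Your approach is perfectly valid; it just does a little more work for the same conclusion.
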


\begin{proof}We may assume that the disc is centred at the origin.  Consider the square lattice with mesh size $\sqrt{\pi r R}$.  Label those squares that intersect $B(0,R)$ as $Q_1,\dots, Q_M$.  These squares are contained in $B(0, R(1+4\sqrt{\tfrac{r}{R}}))$.  Since $M rR = \sum_{j=1}^M m_2(Q_j)> m_2(B(0,R)) = R^2$, we have that $M> \tfrac{R}{r}$.  By throwing away $M-\tfrac{R}{r}$ of the least desirable squares, we arrive at the desired collection.\end{proof}

\begin{lem}\label{smalldiff} Consider a disc $B(z,R)$.  Let $Q_1, \dots, Q_{R/r}$ be the collection of squares contained in $B(z, R(1+4\sqrt{\tfrac{r}{R}}))$ found in Lemma \ref{squarepac}.  Then $m_2(B(z,R)\triangle \bigcup_{j=1}^{R/r} Q_j)\leq C r^{1/2} R^{3/2}.$


\end{lem}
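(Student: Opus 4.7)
My plan is to split the symmetric difference into its two natural pieces and bound each by a direct area comparison against the full collection of lattice squares used in the proof of Lemma \ref{squarepac}.

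Let $\tilde{Q}_1,\dots,\tilde{Q}_M$ denote \emph{all} the lattice squares (of side length $\sqrt{\pi r R}$) that intersect $B(z,R)$, so that the collection $Q_1,\dots,Q_{R/r}$ of Lemma \ref{squarepac} is obtained by discarding $M-\tfrac{R}{r}$ of these. Since every point of $B(z,R)$ lies in some lattice square, which therefore meets $B(z,R)$, we have the sandwich
$$B(z,R)\subset \bigcup_{j=1}^{M}\tilde{Q}_j \subset B\bigl(z,\,R(1+4\sqrt{r/R})\bigr).$$
In the paper's normalization each square has $m_2$-measure $rR$ and $m_2(B(z,\rho))=\rho^2$, so
$$M\cdot rR\leq R^2(1+4\sqrt{r/R})^2 = R^2 + 8\,r^{1/2}R^{3/2} + 16\,rR,$$
which yields $M-\tfrac{R}{r}\leq 8\sqrt{R/r}+16$. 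Hence the total $m_2$-measure of the discarded squares is at most $8\,r^{1/2}R^{3/2}+16\,rR$; using the hypothesis $r<R/16$ to absorb the lower-order term, this is $\leq C\,r^{1/2}R^{3/2}$.

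Next I would write
$$m_2\Bigl(B(z,R)\triangle \bigcup_{j=1}^{R/r}Q_j\Bigr) = m_2\Bigl(B(z,R)\setminus \bigcup_j Q_j\Bigr) + m_2\Bigl(\bigcup_j Q_j\setminus B(z,R)\Bigr).$$
The first summand is at most the total measure of the discarded squares, because $B(z,R)$ is covered by the full collection $\tilde{Q}_1,\dots,\tilde{Q}_M$ and $Q_1,\dots,Q_{R/r}$ is a subcollection. The second summand is at most the $m_2$-measure of the annular shell $B(z,R(1+4\sqrt{r/R}))\setminus B(z,R)$, which equals $8\,r^{1/2}R^{3/2}+16\,rR$. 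Both pieces are thus $\leq C\,r^{1/2}R^{3/2}$, giving the claim.

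This is really just careful bookkeeping, so there is no serious obstacle; the only mildly subtle point is that, since the "least desirable" squares are chosen adversarially from our perspective, I avoid reasoning about \emph{which} squares get thrown away and instead bound their total area only through the estimate on $M-\tfrac{R}{r}$.
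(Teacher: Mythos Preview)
Your proof is correct, but it takes a more laborious route than the paper's. The paper's proof is a one-liner based on the observation you never invoke: the two sets $B(z,R)$ and $\bigcup_{j=1}^{R/r}Q_j$ have \emph{equal} $m_2$-measure (both equal $R^2$), and both sit inside $B\bigl(z,R(1+4\sqrt{r/R})\bigr)$. For any two sets $A,B\subset D$ with $m_2(A)=m_2(B)$ one has $m_2(A\setminus B)=m_2(B\setminus A)\le m_2(D)-m_2(B)$, so $m_2(A\triangle B)\le 2\bigl(m_2(D)-m_2(B)\bigr)$; applying this with $D$ the enlarged disc gives the bound immediately. Your argument instead re-enters the proof of Lemma~\ref{squarepac} to count the discarded squares and bound their total area, which is fine and gives the same numerical estimate, but it is unnecessary once the equal-measure observation is on the table. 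The upside of your approach is that it would still work even if the retained collection did not have exactly the right total area; the paper's shortcut buys brevity precisely because the construction was arranged so that it does.
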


\begin{proof}
Since $m_2(B(z,R)) = m_2\bigl(\bigcup_{j=1}^{R/r} Q_j)=R^2$, the property follows from the fact that both sets are contained in $B(z, R(1+4\sqrt{\tfrac{r}{R}}))$.  
\end{proof}

\section{The construction of the sparse Cantor set $E$}

Let $r_0=1$, and choose $r_j$, $j\in \mathbb{N}$, to be a sequence which tends to zero quickly.  Assume that $r_j<\tfrac{r_{j-1}}{100}$, $\tfrac{1}{r_j}\in \mathbb{N}$, and $\tfrac{r_j}{r_{j+1}}\in \mathbb{N}$ for all $j\geq 1$.

Several additional requirements will be imposed on the decay of $r_j$  over the course of the following analysis, and we make no attempt to optimize the conditions. 

It will be convenient to let $s_{n+1} = 4\sqrt{\tfrac{r_{n+1}}{r_{n}}}$ for $n\in \mathbb{Z}_+$.

First define $\widetilde{B}^{(0)}_1=B(0,1)$.  Given the $n$-th level collection of $\tfrac{1}{r_n}$ discs $\widetilde{B}^{(n)}_j$ of radius $r_n$, we construct the $(n+1)$-st generation according to the following procedure:

Fix a disc $\widetilde{B}^{(n)}_j$.  Apply Lemma \ref{squarepac} with $R=r_n$ and $r=r_{n+1}$ to find $\tfrac{r_n}{ r_{n+1}}$ squares $Q_{\ell}^{(n+1)}$ of side length $\sqrt{\pi r_{n+1} r_n}$ that are pairwise essentially disjoint, and contained in $(1+s_{n+1})\cdot \widetilde{B}^{(n)}_j$.   Let $z_{\ell}^{(n+1)}$ be the centre of $Q_{\ell}^{(n+1)}$, and set $\widetilde{B}_{\ell}^{(n+1)} = B(z_{\ell}^{(n+1)}, r_{n+1})$.    This procedure is carried out for each disc $\widetilde{B}^{(n)}_j$ from the $n$-th level collection.  There are a total of $\tfrac{1}{r_{n+1}}$ discs $\widetilde{B}^{(n+1)}_{\ell}$ in the $(n+1)$-st level.

The above construction is executed for each $n\in \mathbb{Z}_{+}$.

Now, set $B^{(n)}_j = (1+s_{n+1})\widetilde B^{(n)}_j$.  Define $E^{(n)}= \bigcup_j B^{(n)}_j$.  We shall repeatedly use the following properties of the construction:

(a) $\bigcup_{\ell} Q^{(n+1)}_{\ell}\subset E^{(n)}$, for all $n\geq 0$.

(b)  $B^{(n)}_j\subset Q^{(n)}_j$ for each $n\geq 1$. Moreover, $\text{dist}(B^{(n)}_j, \partial Q^{(n)}_j)\geq \tfrac{1}{2}\sqrt{r_{n-1}r_{n}}$.

(c)  $\text{dist}(B^{(n)}_j, B^{(n)}_k)\geq \tfrac{1}{2}\sqrt{r_{n-1}r_{n}}$ whenever $j\neq k$, $n\geq 0$.

\begin{figure}[t]\label{discpic}
\centering
 \includegraphics[width = 100mm]{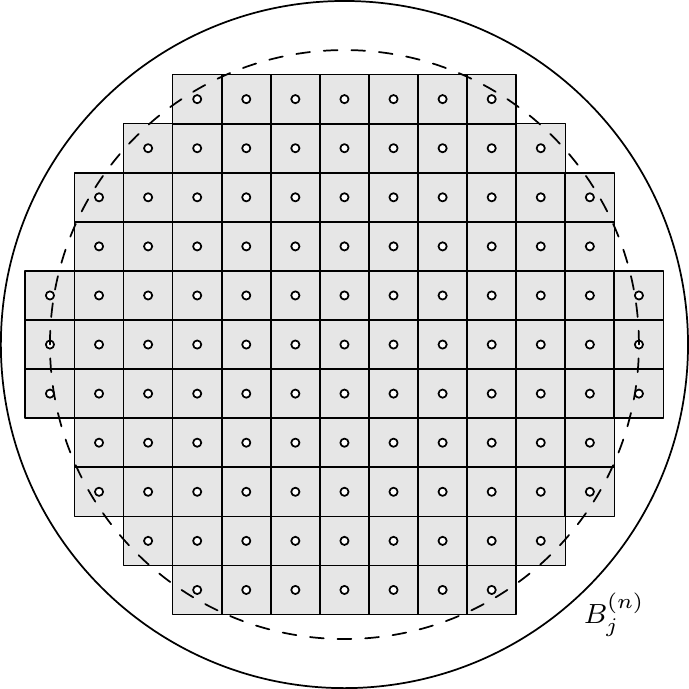}
\caption[Blah.]{The picture shows a single disc $B^{(n)}_j$ of radius $(1+s_{n+1})r_n$.  The grey shaded squares are the squares $Q^{(n+1)}_{\ell}$ of sidelength $\sqrt{\pi r_{n} r_{n+1}}$ formed by applying Lemma \ref{squarepac} to the disc $\widetilde{B}^{(n)}_j$ of radius $r_n$.  The boundary of the disc $\widetilde{B}^{(n)}_j$ is the dashed circle.  Deep inside each square $Q^{(n+1)}_{\ell}$ is the disc $B^{(n+1)}_{\ell}$ of radius $(1+s_{n+2})r_{n+1}$.}
 \end{figure}

Property (a) is immediate.  To see property (b), merely note that $\text{dist}(B^{(n)}_j, \partial Q^{(n)}_j)=\tfrac{\sqrt{\pi r_{n-1}r_n}}{2}- (1+s_{n+1})r_n\geq \tfrac{1}{2}\sqrt{r_{n-1}r_n}.$  For property (c), we shall use induction. If $n=0$, then the claim is trivial.  Using (b), the claimed estimate is clear if $Q^{(n)}_j$ and $Q^{(n)}_k$ have been created by an application of Lemma \ref{squarepac} in a common disc $\widetilde{B}^{(n-1)}_{\ell}$.  Otherwise, the squares are born out of applying Lemma \ref{squarepac} to different discs at the $(n-1)$-st level, and those parent discs are already separated by $\tfrac{1}{2}\sqrt{r_{n-2}r_{n-1}}$.

Courtesy of properties (a) and (b), we see that $E^{(n+1)}\subset E^{(n)}$ for each $n\geq 0$.  Set $E=\bigcap_{n\geq 0}E^{(n)}$.  Each $z\in E^{(n)}$ is contained in a unique disc $B^{(n)}_j$ (or square $Q^{(n)}_j$) which we shall denote by $B^{(n)}(z)$ (respectively $Q^{(n)}(z)$).

If $m\geq n\geq 0$, then $E\cap B^{(n)}_j$ is covered by the $\tfrac{r_n}{r_m}$ discs $B^{(m)}_{\ell}$ that are contained in $B^{(n)}_j$, each of which has radius $(1+s_{m+1})r_m \leq 2r_m$.  Therefore $\mathcal{H}^1(E\cap B^{(n)}_j)\leq 2r_n$.  Taking $n=0$ yields $\mathcal{H}^1(E)\leq 2$.

\section{The measure $\mu$}\label{meas}

Define $\mu_j^{(n)} = \frac{1}{r_n}\chi_{\widetilde{B}^{(n)}_j}m_2$.  Set $\mu^{(n)} = \sum_{j} \mu_j^{(n)}$.  Then $\supp(\mu^{(n)}) \subset E^{(n)}$, and $\mu^{(n)}(\mathbb{C})=1$ for all $n$.  Therefore, there exists a subsequence of the sequence of measures $\mu^{(n)}$ that converges weakly to a measure $\mu$, with $\mu(\mathbb{C})=1$ and $\supp(\mu)\subset E$.

The following three properties hold:

(i)  $\supp(\mu^{(m)})\subset \bigcup_j B_j^{(n)}$ whenever $m\geq n$,

(ii)  $\mu^{(m)}(B^{(n)}_j) = r_n$ for $m\geq n$, and

(iii)  there exists $C_0>0$ such that $\mu^{(n)}(B(z,r))\leq C_0r$ for any $z\in \mathbb{C}$, $r>0$ and $n\geq 0$.

Properties (i) and (ii) follow immediately from the construction of $E^{(n)}$.  To see the third property, note that since $\mu^{(n)}$ is a probability measure, the property is clear if $r\geq 1$.  If $r<1$, then $r\in (r_{m+1}, r_{m})$ for some $m\in \mathbb{Z}_+$.  If $m\geq n$, then $B(z,r)$ intersects at most one disc $B^{(n)}_j$.  Then $\mu^{(n)}(B(z,r)) = \frac{1}{r_n} m_2(B(z,r)\cap \widetilde{B}^{(n)}_j)\leq \frac{r^2}{r_n}\leq r$.  Otherwise $m<n$.  In this case, note that since the discs $B^{(m+1)}_j$ are $\tfrac{1}{2}\sqrt{r_{m}r_{m+1}}$ separated,  $B(z,r)$ intersects at most $1+ C\bigl(\frac{r}{\sqrt{r_mr_{m+1}}}\bigl)^2$ discs $B^{(m+1)}_j$. Hence, by property (ii), we see that
$$\mu^{(n)}(B(z,r)) =\sum_j \mu^{(n)}(B(z,r)\cap B^{(m+1)}_j)\leq \Bigl[1+ C\Bigl(\frac{r}{\sqrt{r_mr_{m+1}}}\Bigl)^2\Bigl] r_{m+1},
$$
which is at most $Cr$.

The weak convergence of a subsequence of $\mu^{(n)}$ to the measure $\mu$, along with property (iii), yields that $\mu(B(z,r))\leq C_0r$ for any disc $B(z,r)$.    We shall henceforth refer to this property by saying that $\mu$ \textit{is $C_0$-nice}.  We have now shown that $\mu$ is $1$-dimensional.

Notice that we also have $\mathcal{H}^1(E)\geq \tfrac{1}{C_0}\mu(E)>0.$

\section{The boundedness of $T_{\mu}(1)$ off the support of $\mu$}


As a simple consequence of the weak convergence of $\mu^{(n)}$ to $\mu$, the property that $\|T_{\mu}(1)\|_{L^{\infty}(\mathbb{C}\backslash \supp(\mu))}<\infty$ will follow from the following proposition.

\begin{prop}\label{maxest} Provided that $\sum_{n\geq 1}\sqrt{s_n}<\infty$, there exists a constant $C>0$ so that the following holds:

Suppose that $\dist(z,\supp(\mu))=\eps>0$.  Then for any $m\in \mathbb{Z}_+$ with $r_m<\tfrac{\eps}{4}$,
$$
\Bigl|\int_{\mathbb{C}} K(z-\xi) d\mu^{(m)}(\xi)\Bigl|\leq C.
$$
\end{prop}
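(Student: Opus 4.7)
My plan is to apply Lemma \ref{refl} at the finest scale, recognize the resulting sum over centers as a midpoint Riemann approximation of the integral at the next coarser scale, and telescope. The polar-integration computation in the proof of Lemma \ref{refl} extends to $\omega\notin B(z_0,r)$ to give
\[
\int_{B(z_0,r)} K(\omega-\xi)\,dm_2(\xi) \;=\; r^2 K(\omega-z_0)\;-\;\frac{r^4}{(\omega-z_0)^3}.
\]
Since $r_m<\eps/4$ and every $B^{(m)}_j$ meets $\supp(\mu)$ with diameter $\leq 4r_m$, one has $|z-z^{(m)}_j|\geq \eps/2$ for all $j$. Applying the formula on each $\widetilde B^{(m)}_j$ gives
\[
T_{\mu^{(m)}}(z) \;=\; r_m\sum_j K(z-z^{(m)}_j)\;-\;\sum_j \frac{r_m^{3}}{(z-z^{(m)}_j)^{3}}\;=:\;F_m(z)-E_m(z),
\]
with $|E_m(z)|\leq Cr_m^2/\eps^2=O(1)$ by a dyadic annular decomposition using the $C_0$-niceness (property (iii)).

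\textbf{Riemann telescope.} Group the sum defining $F_n$ by parent $\ell$ at level $n-1$. Its $r_{n-1}/r_n$ children centers $z^{(n)}_k$ sit at the centers of essentially disjoint squares $Q^{(n)}_k$ of $m_2$-area $r_{n-1}r_n$ that cover $\widetilde B^{(n-1)}_\ell$ up to a symmetric difference of $m_2$-measure at most $Cr_n^{1/2}r_{n-1}^{3/2}$ (Lemma \ref{smalldiff}). Hence $r_n\sum_{k\in\mathrm{ch}(\ell)} K(z-z^{(n)}_k)$ is a midpoint Riemann sum for $T_{\mu^{(n-1)}_\ell}(z)=\tfrac{1}{r_{n-1}}\int_{\widetilde B^{(n-1)}_\ell} K(z-\xi)\,dm_2(\xi)$. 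Summing over $\ell$ yields $F_n(z)=T_{\mu^{(n-1)}}(z)+\mathrm{Err}_n(z)$, so $T_{\mu^{(n)}}(z)-T_{\mu^{(n-1)}}(z)=\mathrm{Err}_n(z)-E_n(z)$. Telescoping gives
\[
T_{\mu^{(m)}}(z)\;=\;T_{\mu^{(0)}}(z)\;+\;\sum_{n=1}^{m}\bigl[\mathrm{Err}_n(z)-E_n(z)\bigr].
\]
The base term is bounded directly by Lemma \ref{refl} (zero if $z\in B(0,1)$, equal to $(|z|^2-1)/z^3$ otherwise), and the $E_n$-sum is $O(1)$ by geometric decay. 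The task reduces to summing $\mathrm{Err}_n$, which splits into per-square midpoint errors of size $\lesssim r_{n-1}r_n^2/|z-\cdot|^3$ and boundary-layer errors supported in the thin symmetric difference $\cup_k Q^{(n)}_k\triangle \widetilde B^{(n-1)}_\ell$.

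\textbf{Main obstacle.} The midpoint Riemann errors sum geometrically by a dyadic argument identical to that for $E_n$. The boundary-layer errors are the difficult piece: the crude bound using $|K|\lesssim 1/|z-\cdot|$ together with the symmetric-difference-mass estimate $\lesssim s_n r_{n-1}$ per parent only yields $O(s_n\log(1/\eps))$ per level, which is not summable. The required refinement to $O(\sqrt{s_n})$ per level should come from exploiting that the symmetric difference lives in a thin annular shell of radial width $\sqrt{s_n}\,r_{n-1}$ at the parent's boundary circle, against which $K$ enjoys circular-mean cancellation in the spirit of Lemma \ref{refl} itself. The hypothesis $\sum_n\sqrt{s_n}<\infty$ then closes the argument. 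A secondary technicality is the (at most one) parent per level whose disc contains $z$: the smoothness-based Riemann estimates for that parent fail, but Lemma \ref{refl} makes its contribution vanish identically, reducing the local task to a direct bound on the children's center sum via the separation property (c) and the uniform lower bound $|z-z^{(n)}_k|\geq \eps/2$ at the finest scale.
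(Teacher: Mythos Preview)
Your telescoping scheme is genuinely different from the paper's approach, but it has a real gap at the coarse levels $n$ with $r_n>\eps$ (there are roughly $q$ such levels, where $q$ is the least integer with $r_q\le\eps$).  At those levels the point $z$ may sit just outside some disc $\widetilde B^{(n)}_{k_0}$, so that $|z-z^{(n)}_{k_0}|$ is as small as $r_n$ while $z\notin\widetilde B^{(n)}_{k_0}$.  Then the single term $r_n K(z-z^{(n)}_{k_0})$ in $F_n$ has size $\sim 1$, whereas the matching piece $\tfrac{1}{r_{n-1}}\int_{Q^{(n)}_{k_0}}K(z-\xi)\,dm_2$ is only $O(s_n)$ (by the $L^2$-bound $\int_A|K|\,dm_2\le C\sqrt{m_2(A)}$).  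Hence $\mathrm{Err}_n$ picks up an $O(1)$ contribution at every such level.  The same happens to $E_n$: the single cubic term $r_n^{3}/(z-z^{(n)}_{k_0})^{3}$ is $O(1)$, not $O(r_n^2/\eps^2)$.  Your claimed fix---the lower bound $|z-z^{(n)}_k|\ge\eps/2$---is available only at the finest level $n=m$ (that is precisely why the hypothesis $r_m<\eps/4$ appears), and fails for all $n<q$.  Consequently $\sum_{n=1}^m|\mathrm{Err}_n-E_n|$ is of order $q$, which is not bounded independently of $z$.  This is not a ``secondary technicality'': it is the heart of the matter, and the reflectionless property does not by itself control the discrete center sum inside the parent that contains $z$.

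The paper sidesteps this difficulty by \emph{not} telescoping in $n$.  It keeps $\mu^{(m)}$ fixed and decomposes the domain of integration spatially into the nested shells $B^{(n-1)}(z^*)\setminus B^{(n)}(z^*)$ for $n=1,\dots,q$, plus the innermost piece $B^{(q)}(z^*)$.  On each shell (Lemma~\ref{scale}) the comparison of $\mu^{(m)}$ with $m_2/r_{n-1}$ is made only on squares $Q^{(n)}_j$ that are already separated from $z$ by $\gtrsim\sqrt{r_{n-1}r_n}$ (or $\gtrsim\eps$ in the borderline case $n=q$), so the Taylor/comparison estimates never see a nearby center.  The entire near-$z$ contribution is absorbed in one stroke by the trivial bound $\int_{B^{(q)}(z^*)}|K(z-\xi)|\,d\mu^{(m)}\le 2r_q/\eps\le 2$.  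The upshot is that the paper's annular decomposition localizes the singularity once, whereas your level-by-level telescope revisits it $q$ times.
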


To begin the proof, fix $r_m$ with $r_m<\tfrac{\eps}{4}$.  Let $z^*\in \supp(\mu)$ with $\dist(z,z^*)=\eps$.  For any $\xi\in \supp(\mu)$,  $B^{(m)}(\xi)\cap \supp(\mu^{(m)})\neq \varnothing$, so $\dist(z, \supp(\mu^{(m)}))\geq \eps - (1+s_{m+1}) r_m\geq \tfrac{\eps}{2}$.

Now, let $q$ be the least integer with $r_q\leq \eps$ (so $m\geq q$).  Then by property (ii) of the previous section,
\begin{equation}\label{qest}\int_{B^{(q)}(z^*)} |K(z,\xi)|d\mu^{(m)}(\xi) \leq \frac{2}{\eps}\mu^{(m)}(B^{(q)}(z^*)) = \frac{2r_{q}}{\eps}\leq 2.
\end{equation}

The crux of the matter is the following lemma.

\begin{lem}\label{scale}There exists $C>0$ such that for any $n\in \mathbb{Z}_+$ with $1\leq n\leq q$,
$$\Bigl|\int_{B^{(n-1)}(z^*)\backslash B^{(n)}(z^*)} K(z-\xi) d\mu^{(m)}(\xi)\Bigl| \leq C\sqrt{s_n} + C\sqrt{\tfrac{\eps}{r_{n-1}}}.
$$
\end{lem}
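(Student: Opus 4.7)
The plan is to smooth $\mu^{(m)}$ on each child disc $B^{(n)}_j \subset B^{(n-1)}(z^*)$ with $j \neq j^*$ (where $B^{(n)}(z^*) = B^{(n)}_{j^*}$), and then aggregate to extract cancellation from Lemma \ref{refl} at the parent scale. Since the integration region is supported on $\bigsqcup_{j \neq j^*} B^{(n)}_j$, one has
\[
I = \sum_{\substack{j \neq j^*\\B^{(n)}_j \subset B^{(n-1)}(z^*)}} \int_{B^{(n)}_j} K(z-\xi)\,d\mu^{(m)}(\xi),
\]
and for each such $j$ I compare $\mu^{(m)}|_{B^{(n)}_j}$ with the flatter measure $\nu_j := \tfrac{1}{r_{n-1}} \chi_{Q^{(n)}_j} m_2$, which also has total mass $r_n$ because $m_2(Q^{(n)}_j) = r_{n-1}r_n$.

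For the per-disc replacement error: since $\eps < r_{n-1}$ and the discs $B^{(n)}_j$ are pairwise $\tfrac12\sqrt{r_{n-1} r_n}$-separated (property (c)), one has $|z - z^{(n)}_j| \gtrsim \sqrt{r_{n-1} r_n}$ for $j \neq j^*$. The function $\xi \mapsto K(z-\xi)$ is then Lipschitz on $Q^{(n)}_j \cup B^{(n)}_j$ with constant $\lesssim 1/|z - z^{(n)}_j|^2$. Since the constant mode $K(z - z^{(n)}_j)$ integrates equally against $\mu^{(m)}|_{B^{(n)}_j}$ and $\nu_j$, the per-disc error is $\lesssim r_n \sqrt{r_{n-1} r_n}/|z - z^{(n)}_j|^2$. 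A dyadic annular count (at most $O(\rho^2/(r_{n-1} r_n))$ centers $z^{(n)}_j$ within distance $\rho$ of $z$) totals this to $O(\sqrt{r_n/r_{n-1}}\,\log(r_{n-1}/r_n))$, absorbed into $C\sqrt{s_n}$ under mild decay of $r_n$.

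After replacement, the remaining integral becomes
\[
\frac{1}{r_{n-1}}\left[\int_{\widetilde B^{(n-1)}(z^*)} K(z-\xi)\,dm_2 - \int_{Q^{(n)}(z^*)} K(z-\xi)\,dm_2\right] + \mathcal{E}_\partial,
\]
where $\mathcal{E}_\partial$ accounts (via Lemma \ref{smalldiff}) for the symmetric difference between $\bigcup_j Q^{(n)}_j$ and $\widetilde B^{(n-1)}(z^*)$. When $z \in \widetilde B^{(n-1)}(z^*)$, Lemma \ref{refl} gives $\int_{\widetilde B^{(n-1)}(z^*)} K(z-\xi)\,dm_2 = 0$; otherwise, the explicit formula from that lemma's proof produces a contribution $\lesssim \dist(z, \widetilde B^{(n-1)}(z^*))/r_{n-1} \lesssim s_n + \eps/r_{n-1}$. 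The integral over $Q^{(n)}(z^*)$ is $\lesssim \sqrt{r_{n-1}r_n}$ (via $\int_Q |K(z-\xi)|\,dm_2 \lesssim \text{diam}(Q)$), hence contributes $\lesssim s_n$ after division by $r_{n-1}$.

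The main obstacle is bounding $\mathcal{E}_\partial$. The symmetric difference is a tubular annulus $A$ around $\partial\widetilde B^{(n-1)}(z^*)$ of thickness $\sim\sqrt{r_{n-1}r_n}$ and $m_2$-mass $\lesssim \sqrt{r_n}\,r_{n-1}^{3/2}$, while $z$ may be as close as $\delta \lesssim \eps + s_n r_{n-1}$ to $A$ (because $z^*$ need not lie inside $\widetilde B^{(n-1)}(z^*)$, only in its $(1+s_n)$-dilate). A layer-cake estimate $\int_A |K(z-\xi)|\,dm_2 \le \int_0^\infty m_2(A \cap B(z,s))/s^2\,ds$, combined with the chord-length bound $\sim \sqrt{s\,r_{n-1}}$ for $\partial\widetilde B^{(n-1)}(z^*) \cap B(z,s)$, yields $\mathcal{E}_\partial \lesssim \sqrt{s_n} + \sqrt{r_n/\delta}$. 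When $\delta \lesssim s_n r_{n-1}$, $\sqrt{r_n/\delta} \lesssim \sqrt{s_n}$; when $\delta \gtrsim \eps > s_n r_{n-1} = 4\sqrt{r_n r_{n-1}}$, one has $\sqrt{r_n/\delta} \lesssim \sqrt{r_n/\eps} \lesssim \sqrt{\eps/r_{n-1}}$. Combining with the earlier contributions yields the claimed bound $C\sqrt{s_n} + C\sqrt{\eps/r_{n-1}}$.
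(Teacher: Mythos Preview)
Your overall strategy---replace $\mu^{(m)}|_{B^{(n)}_j}$ by $\tfrac{1}{r_{n-1}}\chi_{Q^{(n)}_j}m_2$, sum, then invoke Lemma~\ref{refl} at the parent scale---is exactly the paper's. Two points deserve attention.

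\textbf{A gap at $n=q$.} Your claim that $|z-z^{(n)}_j|\gtrsim\sqrt{r_{n-1}r_n}$ for every $j\neq j^*$ relies only on $\eps<r_{n-1}$ and the $\tfrac12\sqrt{r_{n-1}r_n}$-separation of the $B^{(n)}_j$. That separation controls $|z^*-z^{(n)}_j|$, not $|z-z^{(n)}_j|$; you still need $\eps\lesssim\sqrt{r_{n-1}r_n}$ to pass from one to the other. For $n<q$ this holds (indeed $\eps<r_n\ll\sqrt{r_{n-1}r_n}$), but for $n=q$ one only has $\eps<r_{q-1}$, which can be far larger than $\sqrt{r_{q-1}r_q}$. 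In that case $z$ may sit inside, or arbitrarily close to, some $Q^{(q)}_j$ with $j\neq j^*$, and your Lipschitz bound on $K(z-\cdot)$ over $Q^{(q)}_j$ breaks down. The paper isolates this case explicitly: at most four such squares can occur, and for each one uses the crude estimate $\int_A|K|\,dm_2\le C\sqrt{m_2(A)}$ on the square together with $\mu^{(m)}(B^{(q)}_j)=r_q$ and $\dist(z,\supp(\mu^{(m)}))\ge\eps/2$ to get a contribution $\le Cs_q$ per square.

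\textbf{A simpler route for $\mathcal{E}_\partial$ and the reflectionless term.} Your layer-cake/chord-length treatment of the boundary error is delicate (for instance $z$ can actually lie in the symmetric-difference set, so $\delta=0$ is possible) and the case split at the end is not quite exhaustive. The paper sidesteps all of this with the single inequality
\[
\int_A|K(z-\xi)|\,dm_2(\xi)\le C\sqrt{m_2(A)},
\]
valid for \emph{any} $z$ and any Borel $A$ (just split $A$ into $A\cap B(z,\rho)$ and $A\setminus B(z,\rho)$ with $\rho=\sqrt{m_2(A)}$). Applied to $A=\widetilde B^{(n-1)}(z^*)\triangle\bigcup_{j\in\mathcal A}Q^{(n)}_j$ together with Lemma~\ref{smalldiff}, this gives $\mathcal{E}_\partial\le C\sqrt{s_n}$ in one line. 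For the reflectionless step the paper does not use the explicit formula but simply enlarges the disc to $(1+\eps/r_{n-1})B^{(n-1)}(z^*)\ni z$, applies Lemma~\ref{refl} there, and bounds the remaining annular integral by the same $\sqrt{m_2}$ trick, yielding $C\sqrt{s_n+\eps/r_{n-1}}$. Your explicit-formula approach also works here, but the $\sqrt{m_2}$ inequality is the tool that uniformly handles all three ``rough region'' estimates in the proof.
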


For the proof of Lemma \ref{scale}, we shall require the following simple comparison estimate.

\begin{lem}\label{comparison}  Let $z_0\in \mathbb{C}$, and $\lambda>0$.   Fix $r,R\in (0,1]$ with $100r\leq R$.  Suppose that $\nu_1$ and $\nu_2$ are Borel measures, such that $\supp(\nu_1)\subset Q(z_0, \sqrt{\pi Rr})=Q$, $\supp(\nu_2)\subset B(z_0, 2r)=B$, and $\nu_1(\mathbb{C})=\nu_2(\mathbb{C})$.  Then, for any $z\in \mathbb{C}$ with $\text{dist}(z,Q)\geq \lambda\sqrt{rR}$, we have
\begin{equation}\begin{split}\nonumber\Bigl|\int_Q K(z-\xi)&d\nu_1(\xi)-\int_B K(z-\xi) d\nu_2(\xi)\Bigl|\\
&\leq \frac{1}{\lambda^2}\int_Q \frac{C\sqrt{Rr}}{|z-\xi|^2} d\nu_1(\xi) +\frac{1}{\lambda^2}\int_B \frac{Cr}{|z-\xi|^2} d\nu_2(\xi).
\end{split}\end{equation}
\end{lem}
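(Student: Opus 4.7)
The strategy is to exploit the matching of total masses $\nu_1(\mathbb{C}) = \nu_2(\mathbb{C}) =: m$ by subtracting the common quantity $mK(z-z_0)$ from both integrals. The left-hand side is then rewritten as
\begin{equation*}
\left|\int_Q [K(z-\xi)-K(z-z_0)]\,d\nu_1(\xi) - \int_B [K(z-\xi)-K(z-z_0)]\,d\nu_2(\xi)\right|,
\end{equation*}
which by the triangle inequality is bounded by the sum of the two integrals taken in absolute value. The problem is thus reduced to producing, for each of $Q$ and $B$, a pointwise estimate on $|K(z-\xi)-K(z-z_0)|$ whose integral against $\nu_j$ matches the corresponding right-hand side term.

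For this I would use the fundamental theorem of calculus along the segment from $\xi$ to $z_0$. The hypothesis $100r\leq R$ ensures $2r\leq\sqrt{\pi Rr}/2$ and hence $B\subset Q$, so the segment lies in the convex set $Q$ regardless of whether $\xi\in Q$ or $\xi\in B$. Every point $\eta$ on this segment therefore satisfies $|z-\eta|\geq \dist(z,Q)\geq \lambda\sqrt{Rr}$; and moreover $|z-\eta|\geq c|z-\xi|$, since $|\xi-\eta|\leq|\xi-z_0|$ is small compared to $|z-\xi|$ (the regime where this comparability fails forces $\lambda$ to be bounded and can be absorbed into absolute constants). Combined with the bound $|\nabla K(w)|\leq C|w|^{-2}$, this yields
\begin{equation*}
|K(z-\xi)-K(z-z_0)|\leq C|\xi-z_0|\sup_{\eta\in[\xi,z_0]}|z-\eta|^{-2}.
\end{equation*}
The diameter estimates $|\xi-z_0|\leq C\sqrt{Rr}$ on $Q$ and $|\xi-z_0|\leq 2r$ on $B$ account for the factors $\sqrt{Rr}$ and $r$ appearing in the numerators on the right-hand side, while splitting $|z-\eta|^{-2}$ as $|z-\eta|^{-1}\cdot|z-\eta|^{-1}$ and controlling one factor by the distance hypothesis and the other by comparison with $|z-\xi|^{-1}$ generates the shapes $\sqrt{Rr}/|z-\xi|^2$ and $r/|z-\xi|^2$.

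The main obstacle I anticipate is producing the full $\lambda^{-2}$ prefactor in the stated bound. A single application of the mean value theorem naturally contributes only one power of $\lambda^{-1}$, arising from the factor $(\lambda\sqrt{Rr})^{-1}$ in the supremum above. The second power of $\lambda^{-1}$ is more delicate and seems to require either iterating the distance estimate $|z-\xi|\geq \lambda\sqrt{Rr}$ a second time after integrating, or, more conceptually, a second-order Taylor expansion of $K$ about $z-z_0$: the first-order term vanishes when one integrates against any measure whose first moment about $z_0$ is zero (which holds, in particular, for the uniform measures on $Q$ and $B$ that arise in the application), and the quadratic error then involves $|\nabla^2 K(w)|\leq C|w|^{-3}$, producing one additional power of $|z-\eta|^{-1}\leq (\lambda\sqrt{Rr})^{-1}$. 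Arranging the bookkeeping so that both powers of $\lambda^{-1}$ appear cleanly in the integrand form demanded by the statement is the technical step where the real work lies.
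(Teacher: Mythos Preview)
Your approach is exactly the paper's: rewrite the left-hand side as $\bigl|\int [K(z-\xi)-K(z-z_0)]\,d(\nu_1-\nu_2)(\xi)\bigr|$ and bound the integrand pointwise via the mean value inequality $|K(z-\xi)-K(z-z_0)|\leq C|\xi-z_0|\sup_{\eta\in[\xi,z_0]}|z-\eta|^{-2}$.

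The only issue is your concern about the factor $\lambda^{-2}$, which is misplaced. That factor is a \emph{loss}, not a gain, and it enters at precisely the step you brushed aside. You asserted $|z-\eta|\geq c\,|z-\xi|$ and remarked that ``the regime where this comparability fails forces $\lambda$ to be bounded and can be absorbed into absolute constants.'' But that regime is exactly where the $\lambda^{-2}$ lives: for $\eta\in[\xi,z_0]\subset Q$ one has $|\eta-\xi|\leq\operatorname{diam}(Q)\leq C\sqrt{Rr}\leq \tfrac{C}{\lambda}|z-\eta|$, whence
\[
|z-\xi|\leq |z-\eta|+|\eta-\xi|\leq\Bigl(1+\tfrac{C}{\lambda}\Bigr)|z-\eta|,
\qquad\text{so}\qquad
\frac{1}{|z-\eta|^{2}}\leq \frac{C}{\lambda^{2}}\,\frac{1}{|z-\xi|^{2}}
\]
for $\lambda$ bounded above by an absolute constant (the only case used in the paper). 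Plugging this into the mean value estimate and using $|\xi-z_0|\leq C\sqrt{Rr}$ on $Q$ and $|\xi-z_0|\leq 2r$ on $B$ produces the two integrands in the statement directly. The second-order Taylor idea is a red herring: no second derivative of $K$ is needed, and the moment conditions you invoke are not available for general $\nu_1,\nu_2$.
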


\begin{proof}
Note that the left hand side of the inequality can be written as
$$\Bigl|\int_Q [K(z-\xi) - K(z-z_0)] d(\nu_1-\nu_2)(\xi)\Bigl|.
$$
But, under the hypothesis on $z$, we have that $|K(z-\xi)-K(z-z_0)|\leq \frac{C|\xi-z_0|}{\lambda^2 |z-\xi|^2}$ for any $\xi\in Q$.  Plugging this estimate into the integral and taking into account the supports of $\nu_1$ and $\nu_2$,  the inequality follows.
\end{proof}

\begin{proof}[Proof of Lemma \ref{scale}]
Write $$\mathcal{A} = \{j: B^{(n)}_j \neq B^{(n)}(z^*) \text{ and } B^{(n)}_j \subset B^{(n-1)}(z^{*})\}.$$
First suppose that $\text{dist}(z, Q^{(n)}_j) \geq \tfrac{1}{4}\sqrt{r_{n-1}r_n}$ for $j\in \mathcal{A}$.  Then the hypothesis of Lemma \ref{comparison} are satisfied with $\nu_1 = \chi_{Q^{(n)}_j}\tfrac{m_2}{r_{n-1}}$, $\nu_2 
= \chi_{{B^{(n)}_j}}\mu^{(m)}$,  $R=r_{n-1}$, $r=r_n$, and $z_0=z_{Q^{(n)}_j}$.  Thus
\begin{equation}\begin{split}\label{smalleps}\Bigl|\int_{Q^{(n)}_j} K(z-\xi)&\frac{dm_2(\xi)}{r_{n-1}}-\int_{B^{(n)}_j} K(z-\xi) d\mu^{(m)}(\xi)\Bigl|\\
&\leq \int_{Q^{(n)}_j}\frac{C\sqrt{r_{n-1}r_n}}{|z-\xi|^2}\frac{dm_2(\xi)}{r_{n-1}} + \int_{B^{(n)}_j} \frac{Cr_nd\mu^{(m)}(\xi)}{|z-\xi|^2}.
\end{split}\end{equation}
Now suppose that $j\in \mathcal{A}$ and $\dist(z, Q^{(n)}_j)\leq \tfrac{1}{4}\sqrt{r_{n-1}r_n}$.  Since $\dist(z, Q^{(n)}_j) \geq \dist(z^*, Q^{(n)}_j)-\dist(z,z^*)\geq \tfrac{1}{2}\sqrt{r_{n-1}r_n}-\eps$, we must have that $\eps\geq \tfrac{1}{4}\sqrt{r_{n-1}r_n}$. But as $\dist(z,\supp(\mu^{(m)}))\geq \tfrac{\eps}{2}$, and $\mu^{(m)}(B^{(n)}_j) = r_n$, we have the following crude bound
\begin{equation}\begin{split}\label{largeps}
\Bigl|\int_{Q^{(n)}_j} &K(z-\xi)\frac{dm_2(\xi)}{r_{n-1}}-\int_{B^{(n)}_j} K(z-\xi) d\mu^{(m)}(\xi)\Bigl|\\
& \leq \frac{C}{r_{n-1}}\sqrt{m_2(Q^{(n)}_j)} + \frac{2}{\eps}\mu^{(m)}(B^{(n)}_j) \leq Cs_n.
\end{split}\end{equation}
(Here it is used that $\int_{A}|K(\xi)|dm_2(\xi) \leq C\sqrt{m_2(A)}$ for any Borel measurable set $A\subset \mathbb{C}$ of finite $m_2$-measure.)

At most $4$ of the essentially pairwise disjoint squares $Q^{(n)}_j$, $j\in \mathcal{A}$, can satisfy $\dist(z, Q^{(m)}_j) \leq \tfrac{1}{4}\sqrt{r_{n-1}r_n}$ (and it can only happen at all if $n=q$).   Therefore by summing (\ref{smalleps}) and (\ref{largeps}) over $j\in \mathcal{A}$ in the cases when $\dist(z, Q^{(n)}_j)\geq \tfrac{1}{4}\sqrt{r_{n-1}r_n}$ and $\dist(z, Q^{(n)}_j) \leq \tfrac{1}{4}\sqrt{r_{n-1}r_n}$ respectively, we see that the quantity
$$\Bigl|\int_{\bigcup_{j\in \mathcal{A}}Q^{(n)}_j} \!\!K(z-\xi) \frac{dm_2(\xi)}{r_{n-1}}-\int_{B^{(n-1)}(z^*)\backslash B^{(n)}(z^*)} \!\!K(z-\xi) d\mu^{(m)}(\xi)\Bigl|,
$$
is no greater than a constant multiple of
$$\int_{B(z,2r_{n-1})\backslash B(z,\tfrac{1}{4}\sqrt{r_nr_{n-1}})}\!\!\sqrt{\frac{r_n}{r_{n-1}}}\frac{dm_2(\xi)}{|z-\xi|^2}+  \int_{\mathbb{C}\backslash B(z,\tfrac{1}{4}\sqrt{r_nr_{n-1}})}\!\!\frac{r_nd\mu^{(m)}(\xi)}{|z-\xi|^2}+s_n.
$$
The first term here is bounded by $C\sqrt{\tfrac{r_n}{r_{n-1}}}\log\bigl(\tfrac{r_{n-1}}{r_n}\bigl)\leq Cs_n\log(\tfrac{1}{s_n})\leq C\sqrt{s_n}$.  Since $\mu^{(m)}$ is $C_0$-nice, we bound the second term by $$Cr_n\int_{\tfrac{1}{4}\sqrt{r_nr_{n-1}}}^{\infty}\frac{dr}{r^2}\leq Cr_n \frac{1}{\sqrt{r_n r_{n-1}}}\leq Cs_n.$$

We now wish to estimate $\int_{\bigcup_{j\in \mathcal{A}}Q^{(n)}_j} K(z-\xi) \frac{dm_2(\xi)}{r_{n-1}}$.   
With a slight abuse of notation, write $\widetilde{B}^{(n-1)}(z^*)=\widetilde{B}^{(n-1)}_j$ if $z^*\in B^{(n-1)}_j$.  Then
$$\Bigl|\int_{\widetilde{B}^{(n-1)}(z^*)} K(z-\xi) \frac{dm_2(\xi)}{r_{n-1}}- \int_{\bigcup_{j\in \mathcal{A}}Q^{(n)}_{j}} K(z-\xi)\frac{dm_2(\xi)}{r_{n-1}}\Bigl|
$$
is bounded by $\tfrac{C}{r_{n-1}}\bigl(m_2\bigl(\widetilde{B}^{(n-1)}(z^{*})\triangle \bigcup_{j\in \mathcal{A}}Q^{(n)}_{j}\bigl)\bigl)^{\tfrac{1}{2}}.$  By Lemma \ref{smalldiff}, this quantity is no greater than $\tfrac{C}{r_{n-1}}\sqrt{r_{n}^{1/2}r_{n-1}^{3/2}+r_nr_{n-1}} \leq C\sqrt{s_n}.$  

It remains to employ the reflectionless property (Lemma \ref{refl}).  
Since $z\in (1+\tfrac{\eps}{r_{n-1}})B^{(n-1)}(z^*)$, we use Lemma \ref{refl} to infer that
$$\Bigl|\int_{\widetilde{B}^{(n-1)}(z^*)}K(z-\xi)\frac{dm_2(\xi)}{r_{n-1}}\Bigl| = \Bigl|\int_{(1+\tfrac{\eps}{r_{n-1}})B^{(n-1)}(z^*)\backslash \widetilde{B}^{(n-1)}(z^*)}\!\!\!K(z-\xi)\frac{dm_2(\xi)}{r_{n-1}}\Bigl|.
$$
This quantity is bounded by $\tfrac{C}{r_{n-1}}\bigl(m_2((1+\tfrac{\eps}{r_{n-1}})B^{(n-1)}(z^*)\backslash \widetilde{B}^{(n-1)}(z^*))\bigl)^{\tfrac{1}{2}}\leq C\sqrt{s_n+ \tfrac{\eps}{r_{n-1}}}$.  The lemma follows.
\end{proof}

With Lemma \ref{scale} in hand, we may complete the proof of Proposition \ref{maxest}.  First write
\begin{equation}\begin{split}\label{splitup}\int_{\mathbb{C}}K(z-\xi) d\mu^{(m)}(\xi)& = \int_{B^{(q)}(z^*)}K(z-\xi) d\mu^{(m)}(\xi) \\
&+\sum_{n=1}^q\int_{B^{(n-1)}(z^*)\backslash B^{(n)}(z^*)}K(z-\xi) d\mu^{(m)}(\xi).
\end{split}\end{equation}
Next note that that $\tfrac{\eps}{r_{n-1}} \leq 1$ if $n= q$, and $\sqrt{\tfrac{\eps}{r_{n-1}}}\leq s_n$ for $1\leq n<q$.  As $\sum_{n\geq 1} \sqrt{s_n} <\infty,$
it follows from Lemma \ref{scale} that the sum appearing in the right hand side of (\ref{splitup}) is bounded in absolute value independently of $q$, $m$ and $\eps$.  The remaining term on the right hand side of (\ref{splitup}) has already been shown to be bounded in absolute value, see (\ref{qest}).

\section{$T_{\mu}(1)$ fails to exist in the sense of principal value $\mu$-almost everywhere}

We now turn to consider the operator in the sense of principal value.  The primary part of the argument will be the following lemma.

\begin{lem}\label{misbalance}  Provided that $n$ is sufficiently large, there exists a constant $c_0>0$ such that for any disc $B^{(n)}_j$, and $z\in \mathbb{C}$ satisfying \newline$\dist(z, \partial B^{(n)}_j)\leq c_0r_n$,
$$\Bigl|\int_{A(z, r_n)} K(z-\xi) d\mu(\xi)\Bigl|\geq c_0.
$$
\end{lem}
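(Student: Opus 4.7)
The plan is to reduce $\int_{A(z,r_n)} K(z-\xi)\,d\mu(\xi)$ to an integral against the normalized Lebesgue measure $\tfrac{1}{r_n}\chi_{\widetilde{B}^{(n)}_j}m_2$, and then invoke Lemma \ref{lebmisbal} together with a continuity argument. First I use the separation property (c): since $\dist(z,\partial B^{(n)}_j)\leq c_0 r_n$ while any two distinct discs $B^{(n)}_j, B^{(n)}_k$ are at distance at least $\tfrac{1}{2}\sqrt{r_{n-1}r_n} \gg r_n$, no other $B^{(n)}_k$ meets $B(z,r_n)$. Since $\supp(\mu)\subset \bigcup_k B^{(n)}_k$, the integral reduces to $\int_{A(z,r_n)\cap B^{(n)}_j} K(z-\xi)\,d\mu(\xi)$.

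Next, I decompose $\mu|_{B^{(n)}_j} = \sum_\ell \mu|_{B^{(n+1)}_\ell}$ (each piece of mass $r_{n+1}$ by property (ii) of Section \ref{meas}) and compare each $\mu|_{B^{(n+1)}_\ell}$ with the normalized Lebesgue measure $\tfrac{1}{r_n}\chi_{Q^{(n+1)}_\ell} m_2$ (also of mass $r_{n+1}$). For squares $Q^{(n+1)}_\ell$ lying well inside $A(z,r_n)$ (so $\dist(z,Q^{(n+1)}_\ell)\gtrsim r_n$, i.e.\ $\lambda\gtrsim \sqrt{r_n/r_{n+1}}$ in Lemma \ref{comparison} with $R=r_n$, $r=r_{n+1}$), summing the resulting bounds yields $O(s_{n+1}^3)$, using the standard estimates $\int_{A(z,r_n)}\tfrac{dm_2}{|z-\xi|^2} = O(1)$ and $\int_{A(z,r_n)}\tfrac{d\mu}{|z-\xi|^2} = O(1/r_n)$ (the latter from $C_0$-niceness). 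The $O(1/s_{n+1})$ squares straddling $\partial A(z,r_n)$ each contribute at most $O(r_{n+1}/r_n) = O(s_{n+1}^2)$ to either side (using $|K|\leq 2/r_n$ on the annulus together with the mass $r_{n+1}$ carried by each square/disc), totaling $O(s_{n+1})$. Lemma \ref{smalldiff} controls the symmetric difference $\bigcup_\ell Q^{(n+1)}_\ell \triangle \widetilde{B}^{(n)}_j$, contributing one more $O(s_{n+1})$ error. The upshot is $\int_{A(z,r_n)} K(z-\xi)\,d\mu(\xi) = \int_{A(z,r_n)\cap \widetilde{B}^{(n)}_j} K(z-\xi)\,\tfrac{dm_2(\xi)}{r_n} + O(s_{n+1})$.

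Finally, I rescale $\widetilde{B}^{(n)}_j$ to $B(0,1)$; the Lebesgue integral above becomes $f(\omega) := \int_{A(\omega,1)\cap B(0,1)} K(\omega-\xi)\,dm_2(\xi)$ evaluated at the rescaled $z$, which lies within $c_0+s_{n+1}$ of $\partial B(0,1)$. The function $f$ is uniformly Lipschitz in a fixed neighborhood of $\partial B(0,1)$, because both the integrand $K(\omega-\cdot)$ and the integration domain vary Lipschitz-continuously in $\omega$ while $|K(\omega-\xi)|\leq 2$ on $A(\omega,1)$. By Lemma \ref{lebmisbal}, $|f(\omega)|\geq \tilde{c}$ for $\omega\in \partial B(0,1)$, so if $c_0$ is a sufficiently small absolute constant and $n$ is taken large enough that $s_{n+1}$ is also small, then $|f(z)|\geq \tilde{c}/2$. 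Combined with the $O(s_{n+1})$ approximation error, this gives $\bigl|\int_{A(z,r_n)} K\,d\mu\bigr| \geq c_0$ after possibly shrinking $c_0$ further.

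The main obstacle is the bookkeeping of the various error terms in the middle step --- in particular verifying that both the straddling-square crude bound and the Lemma \ref{smalldiff} boundary error are small compared to $\tilde{c}$ --- together with confirming that the Lipschitz constant of $f$ is absolute, which needs the rescaling to the unit disc so that the bound does not deteriorate with $n$. No subtlety arises from $\mu$ being a weak limit, since the argument only uses the exact mass identity $\mu(B^{(n+1)}_\ell) = r_{n+1}$, which passes through the weak limit via property (ii).
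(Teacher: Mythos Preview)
Your proposal is correct and follows essentially the same route as the paper: reduce to a single disc via the separation property (c), compare $\mu$ with $\tfrac{m_2}{r_n}$ on $A(z,r_n)\cap B^{(n)}_j$ square-by-square (interior squares via Lemma~\ref{comparison}, straddling squares crudely, leftover via Lemma~\ref{smalldiff}) to get an $O(s_{n+1})$ error---this is exactly the paper's Claim~\ref{anncomp}---and then feed the Lebesgue integral into Lemma~\ref{lebmisbal} with a Lipschitz continuity argument. The only cosmetic difference is that you rescale to the unit disc before the Lipschitz step, whereas the paper works at scale $r_n$ and records the Lipschitz constant as $C/r_n$; these are equivalent.
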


Before proving the lemma, we deduce from it that $T_{\mu}(1)$ fails to exist in the sense of principal value for $\mu$-almost every $z\in \mathbb{C}$.  To this end, we set $F=\{z\in E: z \in (1-c_0)B^{(n)}(z) \text{ for all but finitely many }n\}$.  It suffices to show that $\mu(F)=0$.

First note that, with  $F_n=\{z\in E: z \in (1-c_0)B^{(m)}(z) \text{ for all }m\geq n\}$, we have $F\subset \bigcup_{n\geq 0 } F_{n}$, so it suffices to show that $\mu(F_n)=0$ for all $n$.

To do this, note that for each $m\geq 0$, at most $(1-c_0)\tfrac{r_m}{r_{m+1}} + C\sqrt{\tfrac{r_m}{r_{m+1}}}$ squares $Q^{(m+1)}_{\ell}$ can intersect $(1-c_0)B^{(m)}_j$.  
Thus
\begin{equation}\begin{split}\nonumber\mu\Bigl(\bigcup_{\ell}\Bigl\{B^{(m+1)}_{\ell}\!: & B^{(m+1)}_{\ell}\!\cap \!(1-c_0)B^{(m)}_j \neq \varnothing\Bigl\}\Bigl)\leq (1-c_0)r_m + C\sqrt{\frac{r_{m}}{r_{m+1}}}r_{m+1}\\
&= (1-c_0)\mu(B^{(m)}_j)+ Cs_{m+1}r_m\leq \Bigl(1-\frac{c_0}{2}\Bigl)\mu(B^{(m)}_j),
\end{split}\end{equation}
where the last inequality holds provided that $m$ is sufficiently large.  But then, as long as $n$ is large enough, this inequality may be iterated to yield
$$\mu\bigl(\bigl\{z\in E: z\in (1-c_0)B^{(n+k)}(z) \text{ for }k=1,\dots,m\bigl\}\bigl) \leq (1-\tfrac{c_0}{2})^m.$$ Hence $\mu(F_n)=0$.

In preparation for proving Lemma \ref{misbalance}, we make the following claim.

\begin{cla}\label{anncomp}   Let $n\in \mathbb{Z}_+$.  For any disc $B^{(n)}_j$, and $z\in \mathbb{C}$, we have
$$\Bigl|\int_{A(z,r_n)\cap B^{(n)}_j}K(z-\xi) d\bigl(\mu - \frac{m_2}{r_{n}}\bigl)(\xi)\Bigl|\leq Cs_{n+1}.
$$
\end{cla}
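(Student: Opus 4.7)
The plan is to decompose at scale $n+1$ and compare $\mu$ with $\tfrac{m_2}{r_n}$ cell by cell. The pivotal observation is that both measures put the same total mass on each child cell: $\mu(B^{(n+1)}_\ell) = r_{n+1} = m_2(Q^{(n+1)}_\ell)/r_n$. The first equality follows from property (ii), the Portmanteau inequality $\mu(B^{(n+1)}_\ell)\geq \limsup\mu^{(m)}(B^{(n+1)}_\ell)=r_{n+1}$ (using that $B^{(n+1)}_\ell$ is closed), the disjointness in property (c), and the constraint $\mu(\mathbb{C})=1=\sum_\ell r_{n+1}$; the second is immediate. In particular $\supp(\mu)\subset \bigcup_\ell B^{(n+1)}_\ell\subset \bigcup_\ell Q^{(n+1)}_\ell$, so $\mu$ puts no mass on the residual $R_n:=B^{(n)}_j\setminus \bigcup_\ell Q^{(n+1)}_\ell$. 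I would therefore write
$$\int_{A(z,r_n)\cap B^{(n)}_j}\!\!K\,d\bigl(\mu-\tfrac{m_2}{r_n}\bigr) = \sum_\ell \Delta_\ell - \int_{A(z,r_n)\cap R_n}\!\!K\,\tfrac{dm_2}{r_n},$$
where the sum runs over $\ell$ with $B^{(n+1)}_\ell\subset B^{(n)}_j$ and
$$\Delta_\ell := \int_{A(z,r_n)\cap B^{(n+1)}_\ell}\!\!K\,d\mu - \int_{A(z,r_n)\cap Q^{(n+1)}_\ell}\!\!K\,\tfrac{dm_2}{r_n}.$$

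For the residual, the crude area bound $m_2(R_n)=((1+s_{n+1})^2-1)r_n^2\leq Cs_{n+1}r_n^2$ (since the $r_n/r_{n+1}$ squares jointly cover area $r_n^2$), combined with the uniform kernel bound $|K(z-\xi)|\leq 2/r_n$ on $A(z,r_n)$, yields a contribution $\leq Cs_{n+1}$. For the $\Delta_\ell$'s I split by the position of $Q^{(n+1)}_\ell$ relative to $A(z,r_n)$. Squares disjoint from the annulus contribute zero, since $B^{(n+1)}_\ell\subset Q^{(n+1)}_\ell$. For \emph{interior} squares (those with $Q^{(n+1)}_\ell\subset A(z,r_n)$), the integration regions simplify to $Q^{(n+1)}_\ell$ and $B^{(n+1)}_\ell$ respectively, and Lemma \ref{comparison} applies directly with $\nu_1=\chi_{Q^{(n+1)}_\ell}m_2/r_n$, $\nu_2=\mu|_{B^{(n+1)}_\ell}$, $R=r_n$, $r=r_{n+1}$, and $\lambda=\dist(z,Q^{(n+1)}_\ell)/\sqrt{r_nr_{n+1}}\geq 2/s_{n+1}$ (using $|z-\xi|\geq r_n/2$ on the annulus). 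Plugging $|z-\xi|\geq r_n/2$ into the right-hand side of Lemma \ref{comparison} gives $|\Delta_\ell|\leq Cs_{n+1}^2 (r_{n+1}/r_n)^{3/2}$; summing over at most $r_n/r_{n+1}$ interior squares produces $Cs_{n+1}^3$.

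The delicate case is the \emph{boundary} squares straddling $\partial A(z,r_n)$. Here Lemma \ref{comparison} cannot be applied because $z$ may sit arbitrarily close to $Q^{(n+1)}_\ell$, so I would abandon the fine comparison and use the crude bound $|K|\leq 2/r_n$ on $A(z,r_n)$ together with the mass matching $\nu_1(Q^{(n+1)}_\ell)=\nu_2(B^{(n+1)}_\ell)=r_{n+1}$, giving $|\Delta_\ell|\leq Cr_{n+1}/r_n = Cs_{n+1}^2$ per boundary square. A perimeter-to-side-length count ($\partial A(z,r_n)$ has total length $\leq 3\pi r_n$, while each square has side $\sqrt{\pi r_n r_{n+1}}\sim s_{n+1}r_n$) gives at most $C/s_{n+1}$ boundary squares, for a total of $Cs_{n+1}$. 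Adding the residual, interior, and boundary contributions yields the claimed $\leq Cs_{n+1}$ bound. The main obstacle is precisely this boundary contribution: the $O(1/s_{n+1})$ count of boundary squares must combine with the $O(s_{n+1}^2)$ per-square budget to give exactly $O(s_{n+1})$, and any slack in the counting would lose the rate.
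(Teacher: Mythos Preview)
Your argument is correct and follows essentially the same decomposition as the paper's proof: interior squares (handled via Lemma~\ref{comparison}), boundary squares (crude $|K|\leq 2/r_n$ bound times mass $r_{n+1}$, counted by perimeter), and the residual set (area $\leq Cs_{n+1}r_n^2$). The only cosmetic differences are that you take the sharp $\lambda\geq 2/s_{n+1}$ in Lemma~\ref{comparison}, yielding $Cs_{n+1}^3$ for the interior sum rather than the paper's cruder $Cs_{n+1}$, and you spell out the Portmanteau/disjointness justification for $\mu(B^{(n+1)}_\ell)=r_{n+1}$, which the paper leaves implicit.
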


\begin{proof} To derive this claim, first suppose that a square $Q^{(n+1)}_{\ell}\subset A(z, r_n)$.  Then from a crude application of Lemma \ref{comparison} (see (\ref{smalleps})), we infer that
$$\Bigl|\int_{Q^{(n+1)}_{\ell}} K(z-\xi) d(\mu - \frac{m_2}{r_{n}})(\xi)\Bigl|\leq C \frac{\sqrt{r_nr_{n+1}}}{r_n^2} r_{n+1}\leq C\Bigl(\frac{r_{n+1}}{r_n}\Bigl)^{\tfrac{3}{2}}.
$$
If it instead holds that $Q^{(n+1)}_{\ell}\cap \partial A(z, r_n) \neq \varnothing$, then we have the blunt estimate
$$\Bigl|\int_{Q^{(n+1)}_{\ell}\cap A(z, r_n)} K(z-\xi) d(\mu - \frac{m_2}{r_{n}})(\xi)\Bigl|\leq \frac{2}{r_n}\Bigl[\mu(Q^{(n+1)}_{\ell})+\frac{m_2(Q^{(n+1)}_{\ell})}{r_n}\Bigl],$$
which is bounded by $\tfrac{Cr_{n+1}}{r_n}$.  There are most $\frac{r_n}{r_{n+1}}$ squares $Q^{(n+1)}_{\ell}$ contained in $A(z,r_n)$, and no more than $C\sqrt{\tfrac{r_n}{r_{n+1}}}$ squares $Q^{(n+1)}_{\ell}$ can intersect the boundary of $A(z, r_n)$.

On the other hand, the set $\widetilde{A}$ consisting of the points in $A(z, r_{n})\cap B^{(n)}_j$ not covered by any square $Q^{(n+1)}_{\ell}$ has $m_2$-measure no greater than $Cr_{n+1}^{1/2}r_{n}^{3/2}$ (see  Lemma \ref{smalldiff}).  Thus $\int_{\widetilde{A}}|K(z-\xi)| \tfrac{dm_2(\xi)}{r_{n}}\leq\tfrac{2m_2(\widetilde{A})}{r_n^2} \leq Cs_{n+1}$.

Bringing these estimates together establishes Claim \ref{anncomp}. \end{proof}

Let us now complete the proof of Lemma \ref{misbalance}

\begin{proof}[Proof of Lemma \ref{misbalance}]
Note that $\int_{A(z, r_n)\cap B^{(n)}_j} K(z-\xi) \tfrac{dm_2(\xi)}{r_{n}}$ is a Lipschitz continuous function in $\mathbb{C}$, with Lipschitz norm at most $\tfrac{C}{r_n}$.  Thus, we infer from Lemma \ref{lebmisbal} that there is a constant $c_0>0$ such that
$$\Bigl|\int_{A(z, r_n)\cap B^{(n)}_j} K(z-\xi) \frac{dm_2(\xi)}{r_n}\Bigl|\geq \frac{\tilde{c}}{2},
$$
whenever $\dist(z, \partial B^{(n)}_j)\leq c_0 r_n$.  But now we apply Claim \ref{anncomp} to deduce that for all such $z$,
$\bigl|\int_{A(z,r_n)} K(z-\xi) d\mu(\xi)\bigl|\geq \tfrac{\tilde{c}}{2}-Cs_{n+1}$  (the only part of the support of $\mu$ that $A(z,r_n)$ intersects is contained in $B^{(n)}_j$).  The right hand side here is at least $\tfrac{\tilde{c}}{4}$
for all sufficiently large $n$.
\end{proof}

\section{The set $E$ is purely unrectifiable}

We now show that $E$ is purely unrectifiable, that is, $\mathcal{H}^1(E\cap \Gamma)=0$ for any rectifiable curve $\Gamma$.  The proof that follows is a simple special case of the well known fact that any set with zero lower $\mathcal{H}^1$-density is unrectifiable (one can in fact say much more, see for instance \cite{Mat95}).

First notice that for each $z\in \mathbb{C}$ and $n\geq 1$, $B(z, \tfrac{1}{4}\sqrt{r_nr_{n-1}})$ can intersect at most one of the discs $B^{(n)}_j$. Hence $$\mathcal{H}^1(E\cap B(z, \tfrac{1}{4}\sqrt{r_nr_{n-1}})) \leq 2r_n.$$

A rectifiable curve $\Gamma$ can be covered by discs $B(z_j, \tfrac{1}{4}\sqrt{r_nr_{n-1}})$, $j=1,\dots, N$, the sum of whose radii is at most $\ell(\Gamma)$.

Thus
$\mathcal{H}^1(E\cap \Gamma) \leq \sum_{j=1}^N \mathcal{H}^1(E\cap B(z_j, \tfrac{1}{4}\sqrt{r_nr_{n-1}}))\leq 2\sum_{j=1}^N r_n.$
But $ \sum_{j=1}^N \frac{1}{4}\sqrt{r_nr_{n-1}}\leq \ell(\Gamma)$, and so $ \mathcal{H}^1(\Gamma\cap E)\leq 8\sqrt{\frac{r_n}{r_{n-1}}}\ell(\Gamma)$, which tends to zero as $n\rightarrow \infty$  (the sequence $\sqrt{s_n}$ is summable).

 \end{document}